\theoremstyle{theorem}
\newtheorem{theorem}{Theorem}
\newtheorem{lem}{Lemma}
\theoremstyle{definition}
\newtheorem{definition}{Definition}
\newcommand\extrafootertext[1]{%
    \bgroup
    \renewcommand\thefootnote{\fnsymbol{footnote}}%
    \renewcommand\thempfootnote{\fnsymbol{mpfootnote}}%
    \footnotetext[0]{#1}%
    \egroup
}
\begin{document}

\title{Factoring Variants of Chebyshev Polynomials\\with Minimal Polynomials of $\cos(\frac{2\pi}{d})$}
\markright{Factoring Variants of Chebyshev Polynomials}
\author{D.A.\ Wolfram}
\affil{\small College of Engineering \& Computer Science\\The Australian National University, Canberra, ACT 0200\\
\medskip
{\rm David.Wolfram@anu.edu.au}
}
\date{}

\maketitle

\begin{abstract}
We solve the problem  of factoring polynomials $V_n(x) \pm 1$ and $W_n(x) \pm 1$  where $V_n(x)$ and $W_n(x)$ are  Chebyshev polynomials of the third and fourth kinds. The method of proof is based on previous work by Wolfram~\cite{wolfram} for factoring variants of  Chebyshev polynomials of the first  and second  kinds,  $T_n(x) \pm 1$ and $U_n(x) \pm 1$. We also  show that, in general, there are no factorizations of variants of Chebyshev polynomials of the fifth and sixth kinds, $X_n(x) \pm 1$ and $Y_n(x) \pm 1$   using minimal polynomials of $\cos(\frac{2\pi}{d})$.
\end{abstract}

\extrafootertext{MSC: Primary 12E10, Secondary 12D05}

\section{Background.}

Chebyshev polynomials of the third and fourth kinds were named by Gautschi~\cite{gautschi} and are also called airfoil polynomials~\cite{fromme,handscomb}. They are used in areas such as  solving differential equations~\cite{Abd-Elhameed}, numerical integration~\cite{aghigh,fromme,handscomb}, approximations~\cite{handscomb}, interpolation~\cite{gautschi} and combinatorics~\cite{andrews}.  The significance of their applications in mathematics, engineering and numerical modeling provides a motivation for studying the properties of these polynomials. 

In previous work, Wolfram~\cite{wolfram} solved an open factorization problem for Chebyshev polynomials of the second kind $U_n(x) \pm 1$, and gave a more direct proof of the  result for Chebyshev polynomials of the first kind, $T_n(x) \pm 1$ . We apply this method to solve the analogous factorization problems for Chebyshev polynomials of the third and fourth kinds. All of these factorizations can be expressed in terms of the minimal polynomials of $\cos(\frac{2\pi}{d})$.

\subsection{Chebyshev Polynomials of the Second Kind}
Chebyshev polynomials of the second kind can be defined by 
\begin{equation} \label{Uprop}
U_n(x) = \frac{\sin((n + 1)\theta)}{\sin(\theta)}
\end{equation} where $x = \cos \theta$ and  $n \geq 0$. This is equation (1.4) of Mason and Handscomb~\cite{handscomb}. It follows that 
\begin{equation} \label{UTuran}
U_n(x)^2 -1 = U_{n-1}(x) U_{n+1}(x)
\end{equation} where $n \geq 1$, by applying the trigonometric identity  $\sin^2 A - \sin^2 B = \sin(A + B) \sin(A - B)$ with $A = (n+1)\theta$ and $B = \theta$.

These polynomials satisfy the following recurrence, for example equations (1.6a)--(1.6b) of \cite{handscomb}:
\begin{align} \label{Urec}
 U_0(x) &= 1, &   U_1(x) &= 2x, &U_{n}(x) = 2 x U_{n-1}(x) - U_{n-2}(x)
\end{align} where $n > 1$.

G{\"u}rta\c{s}~\cite{gurtas} showed that
\begin{equation} \label{UPsi}
U_{n-1}(x) =  \prod_{\mathclap{\substack{d \mid 2n \\d > 2}}} \Psi_d(x) 
\end{equation} where  $n \geq 1$.
The polynomials $\Psi_d(x)$ are 
\begin{equation} \label{defPsi}
\Psi_d(x) =  \prod_{\mathclap{\substack{k \in S_{d/2}}}} 2 \left(x - \cos \left(2 \pi \frac{k}{d}\right) \right)
\end{equation} where $S_{d/2} =\{k \mid (k, d) = 1, 1 \leq k < d/2\}$ and $d > 2$. They have degree $\phi(d)/2$ where $\phi$ is Euler's totient function~\cite{gurtas}.
The minimal polynomial in $\mathbb{Q}[x]$ of $\cos(\frac{2\pi}{d})$ is $2^{-\frac{\phi(d)}{2}}\Psi_d(x)$ where $d > 2$ which follows from the proof of Theorem 1 of D. H. Lehmer~\cite{lehmer}.

\begin{definition}  \label{def1}
The polynomial $\Psi_1(x) =2(x -1)$ and  $\Psi_2(x) = 2(x+1)$.  
\end{definition}

These are polynomials with roots $\cos(2 \pi)$  and of $\cos(\pi)$, respectively.  

\subsection{Chebyshev Polynomials of the Third and Fourth Kinds}

Chebyshev polynomials of the third kind can be defined by
\begin{equation} \label{VDef}
V_n(x) = \frac{\cos(n + \frac12)\theta}{\cos \frac{\theta}2}
\end{equation}
and of the fourth kind by
\begin{equation} \label{WDef}
W_n(x) = \frac{\sin(n + \frac12)\theta}{\sin \frac{\theta}2}
\end{equation} where $x = \cos \theta$ and $n \geq 0$.

They can also be defined with respect to Chebyshev polynomials of the second kind, $U_n(x)$:
\begin{equation} \label{VU}
V_n(x) = U_n(x) - U_{n-1}(x)
\end{equation} and
\begin{equation} \label{WU}
W_n(x) = U_n(x) + U_{n-1}(x)
\end{equation} where $n \geq 1$. These are equations (1.17)--(1.18) of Mason and Handscomb~\cite{handscomb}.

\section{Solution}

The method of solution follows that by Wolfram~\cite{wolfram}.The first step is to express $V_n(x)^2 - 1$ and $W_n(x)^2 -1$ in terms of the minimal polynomials $\Psi_d(x)$ where $d \geq 1$.

\begin{lem} \label{allfactors}
Where $n \geq 1$,
\begin{equation} \label{V2}
V_n(x)^2 -1 = \Psi_1(x)\prod_{\mathclap{\substack{d \mid 2n \\d > 2}}} \Psi_d(x) \prod_{\mathclap{\substack{d \mid 2n +2 \\d > 2}}} \Psi_d(x) 
\end{equation}
\begin{equation} \label{W2}
W_n(x)^2 -1 = \Psi_2(x)\prod_{\mathclap{\substack{d \mid 2n \\d > 2}}} \Psi_d(x) \prod_{\mathclap{\substack{d \mid 2n +2 \\d > 2}}} \Psi_d(x).
\end{equation}
\end{lem}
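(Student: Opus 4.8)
The plan is to prove both identities by factoring $V_n(x)^2-1$ and $W_n(x)^2-1$ through the already-established second-kind Turán identity~\eqref{UTuran} and the product formula~\eqref{UPsi}. My starting point for~\eqref{V2} is the defining relation~\eqref{VU}, $V_n(x)=U_n(x)-U_{n-1}(x)$. First I would compute $V_n(x)^2-1$ directly in terms of products of second-kind polynomials. Writing $V_n^2-1=(U_n-U_{n-1})^2-1=U_n^2-2U_nU_{n-1}+U_{n-1}^2-1$ and then substituting $U_n^2-1=U_{n-1}U_{n+1}$ from~\eqref{UTuran} gives $V_n^2-1=U_{n-1}U_{n+1}-2U_nU_{n-1}+U_{n-1}^2=U_{n-1}\bigl(U_{n+1}-2U_n+U_{n-1}\bigr)$. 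The recurrence~\eqref{Urec} gives $U_{n+1}=2xU_n-U_{n-1}$, so the bracket collapses to $2xU_n-2U_n=2(x-1)U_n$, yielding the clean factorization $V_n(x)^2-1=2(x-1)U_{n-1}(x)U_n(x)$.

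The analogous computation for~\eqref{W2} uses~\eqref{WU}, $W_n=U_n+U_{n-1}$. The same manipulation, now with the sign flipped on the cross term and using the recurrence, should produce $W_n(x)^2-1=2(x+1)U_{n-1}(x)U_n(x)$. I expect the only difference from the $V_n$ case to be the replacement of $2(x-1)$ by $2(x+1)$, which is exactly the difference between $\Psi_1(x)$ and $\Psi_2(x)$ in Definition~\ref{def1}.

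With these two closed forms in hand, the second and final step is to translate the $U$-factors into $\Psi_d$ products. Applying~\eqref{UPsi} to $U_{n-1}(x)$ gives $\prod_{d\mid 2n,\,d>2}\Psi_d(x)$, and applying it to $U_n(x)=U_{(n+1)-1}(x)$ gives $\prod_{d\mid 2n+2,\,d>2}\Psi_d(x)$. Recognizing $2(x-1)=\Psi_1(x)$ and $2(x+1)=\Psi_2(x)$ then yields~\eqref{V2} and~\eqref{W2} respectively. The algebra is routine, so the main thing to watch is bookkeeping: I must confirm that the index set $d\mid 2n,\,d>2$ in~\eqref{UPsi} matches exactly after the shift $n\mapsto n+1$, and that the range $n\geq 1$ keeps $U_{n-1}$ well-defined and the Turán identity~\eqref{UTuran} valid (it requires $n\geq 1$). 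The potential obstacle is purely a verification one—ensuring the boundary terms and the single linear factor $\Psi_1$ or $\Psi_2$ are neither double-counted nor omitted—rather than any genuine analytic difficulty.
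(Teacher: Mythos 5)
Your proposal is correct and follows essentially the same route as the paper's own proof: expand $V_n^2-1$ (resp. $W_n^2-1$) via \eqref{VU} (resp. \eqref{WU}), apply the Tur\'an identity \eqref{UTuran}, collapse the bracket with the recurrence \eqref{Urec} to get $\Psi_1(x)U_{n-1}(x)U_n(x)$ (resp. $\Psi_2(x)U_{n-1}(x)U_n(x)$), and finish with \eqref{UPsi}. The index-shift bookkeeping you flag ($U_n = U_{(n+1)-1}$ giving divisors of $2n+2$) is exactly how the paper concludes, so no gap remains.
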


\begin{proof}
From equation (\ref{VU}), we have
\begin{align*}
V_n(x)^2 - 1 =& (U_n(x) - U_{n-1}(x) + 1) (U_n(x) - U_{n-1}(x) - 1) \mbox{ from equation (\ref{VU})}\\
=& U_n(x)^2 -1  -2 U_n(x) U_{n-1}(x)  + U_{n-1}(x)^2\\
=& U_{n+1}(x)U_{n-1}(x)  -2 U_n(x) U_{n-1}(x)  + U_{n-1}(x)^2 \mbox{ from equation (\ref{UTuran})}\\
=& U_{n-1}(x) (U_{n+1}(x) - 2 U_n(x)   + U_{n-1}(x))\\
=& U_{n-1}(x) (2 x U_n(x)   - 2 U_n(x) ) \mbox{ from equation (\ref{Urec})}\\
=& \Psi_1(x) U_{n-1}(x) U_n(x)  \mbox{ from Definition \ref{def1}}\\
=&  \Psi_1(x) \prod_{\mathclap{\substack{d \mid 2n \\d > 2}}} \Psi_d(x) \prod_{\mathclap{\substack{d | 2n +2 \\d > 2}}} \Psi_d(x) \mbox{ from equation (\ref{UPsi}).}
\end{align*}  Similarly, we have
\begin{align*}
W_n(x)^2 - 1 =& (U_n(x) + U_{n-1}(x) + 1) (U_n(x) + U_{n-1}(x) - 1) \mbox{ from equation (\ref{WU})}\\
=& U_n(x)^2 -1  +2 U_n(x) U_{n-1}(x)  + U_{n-1}(x)^2\\
=& U_{n+1}(x)U_{n-1}(x)  +2 U_n(x) U_{n-1}(x)  + U_{n-1}(x)^2 \mbox{ from equation (\ref{UTuran})}\\
=& U_{n-1}(x) (U_{n+1}(x) + 2 U_n(x)   + U_{n-1}(x))\\
=& U_{n-1}(x) (2 x U_n(x)   + 2 U_n(x) ) \mbox{ from equation (\ref{Urec})}\\
=& \Psi_2(x) U_{n-1}(x) U_n(x)  \mbox{ from Definition \ref{def1}}\\
=&   \Psi_2(x) \prod_{\mathclap{\substack{d \mid 2n \\d > 2}}} \Psi_d(x) \prod_{\mathclap{\substack{d | 2n +2 \\d > 2}}} \Psi_d(x) \mbox{ from equation (\ref{UPsi})}
\end{align*} as required. 
\end{proof}

The following theorem solves the factorization problem for $V_n(x)^2 -1$.  The second step of the method concerns defining the mapping that splits the $2n$ factors of $V_n(x)^2 -1$ into the $n$ factors of $V_n(x) +1$ and the other $n$ factors of $V_n(x) -1$. The factorizations are unique up to associativity and commutativity of multiplication.

\begin{theorem} \label{solutionV}
If  $n  \geq 1$,
\begin{equation} \label{Vsolplus}
V_n(x) + 1 = \prod_{\mathclap{\substack{d \mid 2n \\d > 2\\ 2n/d \:\mbox{\rm\small odd}}}} \Psi_d(x) \ \ \ \ \prod_{\mathclap{\substack{d \mid  2n + 2 \\d > 2\\ (2n + 2) / d \:\mbox{\rm\small odd}}}} \Psi_d(x)
\end{equation}

and
\begin{equation} \label{Vsolminus}
V_n(x) - 1 = \Psi_1(x)  \prod_{\mathclap{\substack{d \mid 2n \\d > 2\\ 2n/d \:\mbox{\rm\small even}}}} \Psi_d(x) \ \ \ \  \prod_{\mathclap{\substack{d \mid 2n + 2 \\d > 2\\ (2n + 2) / d \:\mbox{\rm\small even}}}} \Psi_d(x).
\end{equation}

\end{theorem}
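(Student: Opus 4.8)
The plan is to read the two factorizations directly off the squarefree factorization of their product. By Lemma~\ref{allfactors},
$V_n(x)^2-1 = \Psi_1(x)\prod_{d\mid 2n,\,d>2}\Psi_d(x)\prod_{d\mid 2n+2,\,d>2}\Psi_d(x)$.
The factors occurring here are pairwise distinct: no $d>2$ divides both $2n$ and $2n+2$, since any common divisor divides $(2n+2)-2n=2$. Each $\Psi_d$ is a constant multiple of the minimal polynomial of $\cos(\frac{2\pi}{d})$, hence irreducible over $\mathbb{Q}$ with simple roots, so the product is squarefree; consequently $V_n(x)+1$ and $V_n(x)-1$ are each squarefree and coprime, and every $\Psi_d$ in the product divides exactly one of them. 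The whole problem therefore reduces to deciding, for each $d$, whether the roots $r$ of $\Psi_d$ satisfy $V_n(r)=+1$ (putting $\Psi_d$ into $V_n(x)-1$) or $V_n(r)=-1$ (putting it into $V_n(x)+1$).

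The key step is a direct evaluation of $V_n$ at a root of $\Psi_d$. Writing $r=\cos\theta$ with $\theta=2\pi k/d$, $(k,d)=1$ and $1\le k<d/2$ (so that $\cos(\theta/2)=\cos(\pi k/d)>0$ is nonzero), I would substitute into~(\ref{VDef}). For $d\mid 2n$, put $m=2n/d$; then $(n+\tfrac12)\theta=m\pi k+\pi k/d$, and since $\sin(m\pi k)=0$ and $\cos(m\pi k)=(-1)^{mk}$, the angle-addition formula gives $\cos((n+\tfrac12)\theta)=(-1)^{mk}\cos(\theta/2)$, whence $V_n(r)=(-1)^{(2n/d)k}$. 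The identical computation for $d\mid 2n+2$, with $m=(2n+2)/d$, uses $(n+\tfrac12)\theta=m\pi k-\pi k/d$ and yields $V_n(r)=(-1)^{((2n+2)/d)k}$.

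The main obstacle is that this sign appears to depend on $k$, whereas the irreducible $\Psi_d$ must land entirely on one side; resolving this is the crux, and it is a parity-and-coprimality argument. If the cofactor ($2n/d$ or $(2n+2)/d$) is even, then $(-1)^{(\cdots)k}=+1$ for every $k$, so all roots of $\Psi_d$ give $V_n(r)=1$ and $\Psi_d$ belongs to $V_n(x)-1$. If the cofactor is odd, then — because $2n$ and $2n+2$ are even — the divisor $d$ must carry the factor $2$ and so is even; but then $(k,d)=1$ forces $k$ odd, so $(-1)^{(\cdots)k}=-1$ uniformly and $\Psi_d$ belongs to $V_n(x)+1$. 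This is precisely the parity condition recorded in~(\ref{Vsolplus}) and~(\ref{Vsolminus}). The factor $\Psi_1(x)=2(x-1)$ is handled separately: its root $x=1=\cos 0$ gives $V_n(1)=1$ from~(\ref{VDef}), so $\Psi_1$ sits with $V_n(x)-1$, as stated.

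Finally I would assemble the products and fix the constant. For every $d\ge1$ the polynomial $\Psi_d$ has leading coefficient $2^{\deg\Psi_d}$ (immediate from Definition~\ref{def1} and~(\ref{defPsi})), so any product of the $\Psi_d$ has leading coefficient equal to $2$ raised to its own degree; the same holds for $V_n(x)\pm1$, whose leading coefficient is $2^n$ by~(\ref{VU}) and~(\ref{Urec}). Since $V_n(x)+1$ and the right-hand product in~(\ref{Vsolplus}) are both squarefree with the same root set, they have equal degree, hence equal (power-of-two) leading coefficient, and are therefore equal; the degree identity $\sum\phi(d)/2=n$ comes for free from the matching of root sets rather than needing separate proof. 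The same reasoning gives~(\ref{Vsolminus}), completing the argument.
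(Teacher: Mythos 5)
Your proof is correct and follows essentially the same route as the paper: it evaluates $V_n$ at the roots $\cos(2\pi k/d)$ via the defining identity~(\ref{VDef}) and uses the same parity-and-coprimality argument (even cofactor gives $+1$; odd cofactor forces $d$ even and $k$ odd, giving $-1$) to assign each $\Psi_d(x)$ to $V_n(x)-1$ or $V_n(x)+1$, with $\Psi_1(x)$ handled separately. Your closing step --- making explicit the distinctness of the $\Psi_d(x)$, the squarefreeness of $V_n(x)^2-1$, and the coprimality of $V_n(x)\pm 1$ before matching root sets and leading coefficients --- is a slightly more careful finish than the paper's degree count, but it is a refinement of the same argument rather than a different one.
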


\begin{proof}
The polynomial $\Psi_1(x) = 2 (x -1)$ is a factor of $V_n(x)^2 -1$ from equation~(\ref{V2}), and $\Psi_1(\cos(2 \pi)) = 0$. It follows from equation~(\ref{VDef}) that $V_n(\cos(2 \pi)) = 1$ and so $\Psi_1(x)$ is a factor of $V_n(x) -1$.

 If $d \mid 2n$ and $d > 2$,  let $\theta = \frac{2\pi k}{d}$ where $(k, d) = 1$, $1 \leq k < \frac{d}2$ and  $a = \frac{2n}{d}$. We have  $\theta = \frac{\pi a k }{n}$ and $\Psi_d(\cos(\theta)) = 0$.
 From equation~(\ref{VDef}), 
 \begin{align*}
 V_n(\cos(\theta)) =& \frac{\cos((n + \frac12) \frac{\pi a k}n )}{\cos(\frac{\theta}2)}\\
 =& \frac{\cos(\pi a k) \cos(\frac{\theta}2) - \sin(\pi a k) \sin(\frac{\theta}2)}{\cos(\frac{\theta}2)}.
 \end{align*}
The denominator $\cos(\frac{\theta}2) \not= 0 $ because $\frac{\theta}2 = \frac{\pi k}d$ cannot equal $\frac{\pi}2$ when $d > 2$.
The numbers $a k$ and $a$ have the same parity. This is immediate when $a$ is even. If $a$ is odd,  it follows that $d$ is even and $k$ is odd because $(k, d) = 1$. We have $\cos(\pi a k) = \cos(\pi a)$, and  $V_n(\cos(\theta)) = \cos(\pi a)$. 

Hence, if $a$ is even,  then $V_n(\cos(\theta)) = 1$ and $\Psi_d(x)$ is a factor of $V_n(x) - 1$. Similarly, if $a$ is odd, then $V_n(\cos(\theta)) = -1$ and $\Psi_d(x)$ is a factor of $V_n(x) + 1$. 
 \medskip
 
If $d \mid 2n+ 2$ and $d > 2$,   let  $b= \frac{2n + 2}{d}$. We have $\theta = \frac{\pi b k}{n+ 1}$ where $k$ is such that $(k, d) = 1$ and $1 \leq k < \frac{d}2$.
 From equation~(\ref{VDef}), 
 \begin{align*}
 V_n(\cos(\theta)) =& \frac{\cos((n + \frac12) \theta )}{\cos(\frac{\theta}2)}\\
 =& \frac{\cos((n +1 ) \theta - \frac{\theta}2)}{\cos(\frac{\theta}2)}\\
 =& \frac{\cos(\pi b k) \cos(\frac{\theta}2) + \sin(\pi b k) \sin(\frac{\theta}2)}{\cos(\frac{\theta}2)}.
 \end{align*}

Similarly to the previous case  the denominator $\cos(\frac{\theta}2) \not= 0$, the numbers $b k$ and $b$ have the same parity, and $V_n(\cos(\theta)) = \cos(\pi b)$. Ift follows that if $b$ is odd then $\Psi_d(x)$ is a factor of $V_n(x) +1$ and  if $b$ is even then $\Psi_d(x)$ is a factor of $V_n(x) -1$.

From equations~(\ref{Urec}) and~(\ref{VU}), $V_n(x)$ has degree $n$. It follows that the right side of equation~(\ref{V2}) of the factorization of $V_n(x)^2 -1$ has degree $2n$. It has $2n$ factors of the form $2(x - \cos(\theta))$ from  equation~(\ref{defPsi}) and Definition~\ref{def1}, half of which are the factors of $V_n(x) + 1$ and the other half are the factors of $V_n(x) -1$. The mapping defined above maps every such factor of $V_n(x)^2 - 1$ to either  $V_n(x) + 1$ or $V_n(x) - 1$ depending on whether $\cos(\theta)$ is either a root of  $V_n(x) + 1$ or $V_n(x) - 1$, respectively. The right sides of equations~(\ref{Vsolplus}) and (\ref{Vsolminus}) are the products of these mapped factors and so both have degree equal to $n$.

From equations (\ref{Urec}) and (\ref{VU}) and, the  leading coefficients of $V_n(x) \pm 1$ are $2^n$. The expansions of the  factorizations on the right sides of equations (\ref{Vsolplus}) and (\ref{Vsolminus}) both have $2^n$ as leading coefficients also, because each is a  product of $n$ factors of the form $2(x - \cos(\theta))$, as required. 
\end{proof}

\section{Examples with $V$}
The polynomial $V_{12}(x)^2 - 1$ has 24 factors, and $V_{12}(x) + 1$ and $V_{12}(x) -1$ each are the products of half of these factors. 
The mapping in the proof of Theorem \ref{solutionV} gives
\begin{align*}
V_{12}(x) + 1 =& \Psi_8(x) \Psi_{24}(x) \Psi_{26}(x)\\
V_{12}(x) - 1 =& \Psi_1(x) \Psi_3(x)\Psi_4(x)\Psi_6(x)\Psi_{12}(x)\Psi_{13}(x)\\
=& (2(x -1)) (2x + 1) (2x)  (2x -1)  (4 x^2 - 3)\\
&(64 x^6 + 32 x^5 - 80x^4 - 32 x^3 + 24 x^2 + 6x -1)
\end{align*}

The following theorem solves the factorization problem for $W_n(x)^2 -1$. These factorizations are also unique up to associativity and commutativity of multiplication.
\begin{theorem} \label{solutionW}
If  $n  \geq 1$,
\begin{equation} \label{Wsolplus}
W_n(x) + 1 =   \prod_{\mathclap{\substack{d \mid 2n \\d > 1\\ 2n/d \:\mbox{\rm\small odd}}}} \Psi_d(x) \ \ \ \ \prod_{\mathclap{\substack{d \mid 2n + 2 \\d > 2\\ (2n + 2) / d \:\mbox{\rm\small even}}}} \Psi_d(x)
\end{equation}

and
\begin{equation} \label{Wsolminus}
W_n(x) - 1 = \prod_{\mathclap{\substack{d \mid 2n \\d > 1\\ 2n/d \:\mbox{\rm\small even}}}} \Psi_d(x) \ \ \ \  \prod_{\mathclap{\substack{d \mid 2n + 2 \\d > 2\\ (2n + 2) / d \:\mbox{\rm\small odd}}}} \Psi_d(x).
\end{equation}

\end{theorem}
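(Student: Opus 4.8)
The plan is to follow the proof of Theorem~\ref{solutionV} almost verbatim, using equation~(\ref{W2}) as the complete list of the $2n$ linear factors of $W_n(x)^2-1$ and then deciding, for each factor $2(x-\cos\theta)$, whether $W_n(\cos\theta)=+1$ or $W_n(\cos\theta)=-1$. Because $\Psi_2(x)=2(x+1)$ is defined separately in Definition~\ref{def1} and is not captured by the product form~(\ref{defPsi}) (the relevant index set is empty), I would first dispose of it by direct substitution. Setting $\theta=\pi$ in~(\ref{WDef}) gives $\sin(\tfrac{\theta}{2})=1\neq0$ and $W_n(-1)=\sin((n+\tfrac12)\pi)=\cos(n\pi)=(-1)^n$. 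Hence $\Psi_2(x)$ divides $W_n(x)+1$ exactly when $n$ is odd and $W_n(x)-1$ exactly when $n$ is even; writing $n=2n/2$, this is precisely the $d=2$ instance of the first products in~(\ref{Wsolplus}) and~(\ref{Wsolminus}), which is why those products carry the condition $d>1$ rather than $d>2$.

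For the remaining factors I would treat the two families separately. If $d\mid 2n$ with $d>2$, put $\theta=\tfrac{2\pi k}{d}$ with $(k,d)=1$ and $1\le k<\tfrac d2$, and $a=\tfrac{2n}{d}$, so that $(n+\tfrac12)\theta=\pi ak+\tfrac{\theta}{2}$. Expanding the sine and using that $ak$ is an integer (so $\sin(\pi ak)=0$) collapses~(\ref{WDef}) to $W_n(\cos\theta)=\cos(\pi ak)$; the denominator $\sin(\tfrac{\theta}{2})=\sin(\tfrac{\pi k}{d})$ is nonzero since $0<\tfrac{\pi k}{d}<\tfrac{\pi}{2}$. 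As in the proof of Theorem~\ref{solutionV}, $ak$ and $a$ share the same parity, so $W_n(\cos\theta)=(-1)^a$, placing $\Psi_d(x)$ in $W_n(x)-1$ when $a=2n/d$ is even and in $W_n(x)+1$ when it is odd.

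The case $d\mid 2n+2$ with $d>2$ is where I expect the only real subtlety, and it is the reason the parity conditions on the $2n+2$ products are interchanged relative to Theorem~\ref{solutionV}. With $b=\tfrac{2n+2}{d}$ and $\theta=\tfrac{2\pi k}{d}=\tfrac{\pi bk}{n+1}$ one has $(n+\tfrac12)\theta=\pi bk-\tfrac{\theta}{2}$, and expanding $\sin(\pi bk-\tfrac{\theta}{2})$ now yields $W_n(\cos\theta)=-\cos(\pi bk)=-(-1)^b$. This extra minus sign, absent in the third-kind computation where $V_n(\cos\theta)=\cos(\pi b)$, flips the assignment: $\Psi_d(x)$ divides $W_n(x)+1$ when $b=(2n+2)/d$ is even and $W_n(x)-1$ when $b$ is odd, matching~(\ref{Wsolplus}) and~(\ref{Wsolminus}).

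Finally I would close the argument by a degree and leading-coefficient count, exactly as in Theorem~\ref{solutionV}. From~(\ref{Urec}) and~(\ref{WU}), $W_n(x)$ has degree $n$ and $W_n(x)\pm1$ has leading coefficient $2^n$, while~(\ref{W2}) exhibits $W_n(x)^2-1$ as a product of $2n$ distinct linear factors $2(x-\cos\theta)$ (distinctness holds because no $d>2$ divides both $2n$ and $2n+2$, their greatest common divisor being $2$). The mapping above sends each such factor to exactly one of $W_n(x)+1$, $W_n(x)-1$ according to the sign of $W_n(\cos\theta)$, so the right-hand sides of~(\ref{Wsolplus}) and~(\ref{Wsolminus}) together account for all $2n$ factors; matching the degree $n$ and leading coefficient $2^n$ on each side then forces equality.
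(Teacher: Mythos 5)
Your proposal is correct and follows essentially the same route as the paper: evaluate $W_n$ at the roots $\cos(2\pi k/d)$ via~(\ref{WDef}), obtain $W_n(\cos\theta)=(-1)^a$ for $d\mid 2n$ and $W_n(\cos\theta)=-(-1)^b$ for $d\mid 2n+2$ (the sign flip being exactly the paper's reason for interchanging the parity conditions), and close with the degree and leading-coefficient count inherited from Theorem~\ref{solutionV}. The only cosmetic difference is that you treat $\Psi_2(x)$ by direct substitution at $\theta=\pi$ rather than folding it into the $d\mid 2n$, $d>1$ case as the paper does---arguably a cleaner handling, since the index set $S_{d/2}$ in~(\ref{defPsi}) is empty for $d=2$.
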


\begin{proof}

The structure of the proof is similar to that of Theorem~\ref{solutionV}.
If $d \mid 2n$ and $d > 1$, let $a = \frac{2n}{d}$ and $k$ be such that $(k, d) = 1$ where $1 \leq k < \frac{d}2$. From equation~(\ref{WDef}), 
 \begin{align*}
 W_n(\cos(\theta)) =& \frac{\sin((n + \frac12) \frac{\pi a k}n )}{\sin(\frac{\theta}2)}\\
 =& \frac{\cos(\pi a k) \sin(\frac{\theta}2) + \sin(\pi a k) \cos(\frac{\theta}2)}{\sin(\frac{\theta}2)}.
 \end{align*}
 
The denominator $\sin(\frac{\theta}2) \not= 0 $ because $\frac{\theta}2 = \frac{\pi k}d$ cannot equal $\pi$ when $d > 1$. Similarly, we have that  $a k$ and $a$ have the same parity, and  $W_n(\cos(\theta)) = \cos(\pi a)$. Hence, if $a$ is even,  then $W_n(\cos(\theta)) = 1$ and $\Psi_d(x)$ is a factor of $W_n(x) - 1$. If $a$ is odd, then $W_n(\cos(\theta)) = -1$ and $\Psi_d(x)$ is a factor of $W_n(x) + 1$. 
\medskip
 
 If $d \mid 2n+ 2$ and $d > 2$,   let  $b= \frac{2n + 2}{d}$ and $k$ be such that $(k, d) = 1$ where $1 \leq k < \frac{d}2$. We have $\theta = \frac{\pi b k}{n+ 1}$ and $\frac{\theta}2 = \frac{\pi k}d$.
 From equation~(\ref{WDef}), 
 \begin{align*}
 W_n(\cos(\theta)) =& \frac{\sin((n + \frac12) \theta )}{\sin(\frac{\theta}2)}\\
 =& \frac{\sin((n +1 ) \theta - \frac{\theta}2)}{\sin(\frac{\theta}2)}\\
 =& \frac{-\cos(\pi b k) \sin(\frac{\theta}2) + \sin(\pi b k) \cos(\frac{\theta}2)}{\sin(\frac{\theta}2)}.
 \end{align*}
 The denominator $\sin(\frac{\theta}2) \not= 0 $, and $b$ and $bk$ have the same parity, as above.
Hence, if $b$ is even,  then $W_n(\cos(\theta)) = -1$ and $\Psi_d(x)$ is a factor of $W_n(x) + 1$. If $b$ is odd, then $W_n(\cos(\theta)) = 1$ and $\Psi_d(x)$ is a factor of $W_n(x) - 1$.

It is straightforward to show that the degrees of the right sides of equations~(\ref{Wsolplus}) and (\ref{Wsolminus}) are both $n$, and the leading coefficients of both sides of these equations is $2^n$.
\end{proof}

\section{Examples with $W$}
The polynomial $W_{12}(x)^2 - 1$ has 24 factors, and $W_{12}(x) + 1$ and $W_{12}(x) -1$ each are the products of half of these factors. 
The mapping in the proof of Theorem \ref{solutionW} gives
\begin{align*}
W_{12}(x) + 1 =&  \Psi_8(x) \Psi_{24}(x) \Psi_{13}(x)\\
W_{12}(x) - 1 =& \Psi_2(x) \Psi_3(x)\Psi_4(x)\Psi_6(x)\Psi_{12}(x)\Psi_{26}(x)\\
=& (2(x+1))(2x + 1) (2x)  (2x -1)  (4 x^2 - 3)\\
&(64 x^6 - 32 x^5 - 80x^4 
+ 32 x^3 + 24 x^2 - 6x -1)
\end{align*}

When $n$ is odd, $\Psi_2(x)$ is a factor of $W_n(x) +1$:
\begin{align*}
W_{11}(x) + 1 =&  \Psi_2(x) \Psi_{22}(x) \Psi_{3}(x) \Psi_{4}(x)\Psi_6(x) \Psi_{12}(x).
\end{align*}

\section{Chebyshev Polynomials of the Fifth and Sixth Kinds}

Chebyshev polynomials of the fifth kind, $X_n(x)$, and sixth kind, $Y_n(x)$, were defined by  Masjed-Jamei~\cite{XY}. Similarly to the other four kinds of Chebyshev polynomials, they are orthogonal polynomials with integer coefficients and $X_n(x)$ and $Y_n(x)$ have degree $n$ where $n \geq 0$~\cite{Youssri,Youssri2}. They have the form 
\[
\sum_{v=0}^{\lfloor \frac{n}2 \rfloor} a_v x^{n - 2v}.
\]

Monic Chebyshev polynomials of the fifth and sixth kinds, $\bar{X}_n(x)$ and $\bar{Y}_n(x)$, can be defined by the following recurrences  which we simplify from~\cite{Youssri}.
\begin{align}
G_{0,m}(x) =& 1 \nonumber\\
G_{1, m}(x)=& x \nonumber\\
G_{n, m}(x) =& x G_{n -1, m}(x) + A_{n-1, m} \; G_{n-2, m}(x),\; \;  \mbox{ $n > 1$}
\end{align}
where
\begin{align}
A_{n, m} =& \frac{ (2n + m -2)(-1)^n + (2n -(m -2))  - nm -n^2}{(2 n + m -1)(2n +m -3)}, \;\; \mbox{ and}\\
\bar{X}_n(x) =& G_{n, 3}(x) \label{X}\\
\bar{Y}_n(x) =& G_{n, 5}(x). \label{Y}
\end{align}

The first six Chebyshev polynomials of the fifth kind over $\mathbb{Z}$  are
\begin{align*}
X_0(x) =& 1\\
X_1(x) =& x\\
X_2(x) =& 4x^2 - 3\\
X_3(x) =& 6x^3 - 5x\\
X_4(x) =& 16x^4 - 20x^2 + 5\\
X_5(x) =& 80x^5 - 112 x^3 + 35 x\\
X_6(x) =& 64x^6 - 112x^4 + 56x^2 -7.
\end{align*}

The first six Chebyshev polynomials of the sixth kind over $\mathbb{Z}$  are
\begin{align*}
Y_0(x) =& 1\\
Y_1(x) =& x\\
Y_2(x) =& 2x^2 - 1\\
Y_3(x) =& 8x^3 - 5x\\
Y_4(x) =& 16x^4 - 16x^2 + 3\\
Y_5(x) =& 24x^5 - 28 x^3 + 7 x\\
Y_6(x) =& 16x^6 - 24 x^4 + 10x^2 -1.
\end{align*}

The polynomials $X_5(x) \pm 1$ and $Y_5(x) \pm 1$ are irreducible over $\mathbb{Z}$ and none is a minimal polynomial of $\cos(\frac{2 \pi}{d})$, of which there are only two of degree $5$:
\begin{align*}
\Psi_{11}(x) =& 32 x^5 + 16 x^4 - 32 x^3 - 12 x^2 + 6x + 1\\
\Psi_{22}(x) =&32 x^5 - 16 x^4 - 32 x^3 + 12 x^2 + 6x - 1.
\end{align*}

These counter examples for $X_n(x) \pm 1$ and $Y_n(x) \pm 1$, show that, in general, they do not  have factorizations using the minimal polynomials of $\cos(\frac{2\pi}{d})$.

\section{Conclusion}
We have solved the problem of factoring variants of Chebyshev polynomials of the third and fourth kinds, $V_n(x) \pm 1$ and  $W_n(x) \pm 1$,  in terms of minimal polynomials for $\cos(\frac{2 \pi}{d})$. This was done by applying the method of Wolfram~\cite{wolfram} for  factoring $T_n(x) \pm 1$ and  $U_n(x) \pm 1$ in a similar way. We have shown that there are no generalizations of this factorization to similar variants of Chebyshev polynomials of the fifth and sixth kinds, $X_n(x) \pm 1$ and $Y_n(x) \pm 1$.

\section*{Acknowledgment}
 I am grateful  to the College of Engineering \& Computer Science at The Australian National University for research support.

\vfill\eject

\end{document}